\documentclass[11pt]{article}

\usepackage[a4paper,margin=1in]{geometry}
\usepackage{amsmath,amssymb,amsfonts,amsthm}
\usepackage{mathtools}
\usepackage{authblk}
\usepackage{enumitem}
\usepackage{hyperref}
\usepackage{cite}
\usepackage{mathrsfs}
\usepackage{bm}

\newtheorem{theorem}{Theorem}[section]
\newtheorem{lemma}[theorem]{Lemma}
\newtheorem{corollary}[theorem]{Corollary}

\newtheorem{remark}[theorem]{Remark}

\newcommand{\T}{\mathbb{T}}
\newcommand{\R}{\mathbb{R}}

\title{\textbf{Kernel-Type Fractional Extensions of Reverse Callebaut, Rogers H\"older and Cauchy Schwarz Inequalities on Time Scales}}

\author[1]{Nimai Sarkar}
\affil[1]{Department of Mathematics, School of Advanced Sciences, VIT-AP University, Inavolu, Amaravati, Andhra Pradesh 522241, India}
\affil[ ]{\texttt{nimai.s@vitap.ac.in}}

\author[2]{Gobinda Ghosh}
\affil[2]{Mathematics Division, School of Advanced Sciences and Languages, VIT Bhopal University, Kothrikalan, Sehore, Madhya Pradesh 466114, India}
\affil[ ]{\texttt{sagarghosh798@gmail.com}}
\date{}

\begin{document}

\maketitle

\begin{abstract}
In this paper, we establish kernel-type fractional extensions of reverse Callebaut, Rogers--H\"older and Cauchy--Schwarz inequalities on time scales. The proposed results are formulated by means of a nonnegative kernel operator associated with the diamond-$\alpha$ integral, which enables the inclusion of memory effects within the framework of dynamic inequalities. By choosing particular kernels and time scales, the obtained inequalities reduce to their continuous, discrete and quantum counterparts. Moreover, when the kernel is taken as the identity kernel, the results recover the corresponding classical diamond-$\alpha$ dynamic inequalities. Several consequences are derived as special cases, including fractional reverse Cauchy--Schwarz inequalities and weighted reverse Rogers--H\"older inequalities. The results provide a unified approach for studying reverse inequalities in continuous, discrete, quantum and fractional dynamic settings and may be applied to obtain a priori estimates for Volterra-type and delay dynamic equations.
\end{abstract}

\noindent\textbf{Keywords:} Time scales; diamond-$\alpha$ integral; fractional dynamic inequality; reverse Callebaut inequality; reverse Rogers--H\"older inequality; reverse Cauchy--Schwarz inequality; kernel operator.

\medskip

\noindent\textbf{MSC 2020:} 26D15; 26E70; 34N05; 39A13.

\section{Introduction}

The theory of time scales was introduced by Hilger \cite{Hilger1988} in order to provide a unified framework for continuous and discrete analysis. A time scale $\T$ is an arbitrary nonempty closed subset of the real numbers. Hence, by choosing $\T=\R$, one obtains the classical continuous calculus, while the choice $\T=\mathbb{Z}$ leads to the discrete calculus. Other important examples include the quantum time scale $\T=q^{\mathbb{N}_{0}}$, where $q>1$. The main advantage of time scale calculus is that it allows one to establish results in a general form and then recover continuous, discrete and quantum versions as particular cases. Standard references on dynamic equations and inequalities on time scales include \cite{BohnerPeterson2001,BohnerPeterson2003,Agarwal2014}.

Dynamic inequalities on time scales have attracted considerable attention because of their usefulness in qualitative analysis of dynamic equations. Inequalities of H\"older, Minkowski, Jensen, Gronwall, Opial, Hardy and Cauchy--Schwarz type play a fundamental role in the study of existence, uniqueness, boundedness, stability and asymptotic behaviour of solutions. Among these inequalities, reverse forms are especially important because they provide upper bounds for the deviation between two sides of a classical inequality. Such estimates are useful when one needs to measure the gap in H\"older-type or Cauchy--Schwarz-type inequalities.

The diamond-$\alpha$ calculus is an important refinement of time scale calculus. If a function is both delta and nabla differentiable, then its diamond-$\alpha$ derivative is defined as a convex combination of the delta and nabla derivatives. Similarly, the diamond-$\alpha$ integral combines the delta and nabla integrals. More precisely, for $0\leq \alpha \leq 1$, the diamond-$\alpha$ integral is given by
\[
\int_{a}^{b} h(s)\diamond_{\alpha}s
=
\alpha\int_{a}^{b}h(s)\Delta s
+
(1-\alpha)\int_{a}^{b}h(s)\nabla s,
\]
provided that the delta and nabla integrals exist. For $\alpha=1$, the diamond-$\alpha$ integral reduces to the delta integral, while for $\alpha=0$, it reduces to the nabla integral. Thus, the diamond-$\alpha$ framework provides a flexible bridge between forward and backward dynamic analysis.

Recently, reverse versions of classical inequalities on time scales have been investigated using refinements and reverses of Young's inequality. In particular, reverse Callebaut, Rogers--H\"older and Cauchy--Schwarz inequalities have been extended to the diamond-$\alpha$ setting. These results unify discrete, continuous and quantum cases and provide a systematic way to study classical reverse inequalities in the language of time scales. However, most existing reverse dynamic inequalities do not incorporate fractional memory or general kernel effects.

Fractional and kernel-type operators are natural tools for describing memory, hereditary behaviour and nonlocal interactions. In many mathematical models arising from physics, biology, engineering and control theory, the present state depends not only on the current value of the unknown function but also on its past history. Such behaviour is commonly represented by integral operators with kernels. Motivated by this observation, we introduce a kernel-type fractional diamond-$\alpha$ integral operator of the form
\[
\mathcal{I}_{K}^{\alpha}h(t)
=
\int_{t_{1}}^{t}K(t,s)h(s)\diamond_{\alpha}s,
\]
where $K(t,s)\geq 0$ is a suitable kernel. For example, in the continuous case one may take
\[
K(t,s)=\frac{(t-s)^{\mu-1}}{\Gamma(\mu)},\qquad 0<\mu<1,
\]
which corresponds to a Riemann--Liouville-type fractional kernel. In this way, the proposed framework includes both ordinary dynamic inequalities and fractional-type dynamic inequalities as special cases.

The purpose of this paper is to establish kernel-type fractional extensions of reverse Callebaut, Rogers--H\"older and Cauchy--Schwarz inequalities on time scales. The main idea is to combine reverse Young-type inequalities with nonnegative kernel-weighted diamond-$\alpha$ integrals. Since the kernel is assumed to be nonnegative, the order of inequalities is preserved under integration. This allows us to derive fractional reverse inequalities in a unified dynamic setting.

The main contributions of this paper are summarized as follows:
\begin{enumerate}[label=(\roman*)]
    \item We formulate a nonnegative kernel-type diamond-$\alpha$ integral operator on time scales, which provides a unified framework for treating memory-dependent weighted dynamic inequalities.

    \item We establish kernel-type fractional reverse Callebaut inequalities on time scales by combining reverse Young-type estimates with the positivity of the proposed integral operator.

    \item We derive fractional reverse Rogers--H\"older inequalities involving conjugate indices $p$ and $q$ within the proposed kernel-type dynamic framework.

    \item We obtain reverse Cauchy--Schwarz inequalities as direct consequences of the fractional Rogers--H\"older inequalities by choosing $p=q=2$.

    \item We discuss several limiting and special cases, showing that the obtained results reduce to known diamond-$\alpha$ dynamic inequalities, as well as continuous, discrete and quantum forms, for suitable choices of the kernel and the underlying time scale.

    \item We indicate how the proposed inequalities can be used to derive a priori estimates for Volterra-type and delay dynamic equations on time scales.
\end{enumerate}

The results obtained in this work extend the scope of reverse dynamic inequalities from the classical diamond-$\alpha$ setting to a broader kernel-type fractional framework. In particular, if $K(t,s)=1$ and $t=t_{2}$, then the proposed inequalities reduce to the usual diamond-$\alpha$ reverse inequalities. If $\T=\R$, the results give continuous fractional integral inequalities, while if $\T=\mathbb{Z}$, they lead to discrete fractional sum inequalities. Similarly, choosing $\T=q^{\mathbb{N}_{0}}$ yields quantum-type fractional reverse inequalities.

The paper is organized as follows. In Section 2, we recall basic definitions and preliminary inequalities needed throughout the paper. In Section 3, we establish kernel-type reverse Callebaut inequalities on time scales. Section 4 is devoted to fractional reverse Rogers--H\"older inequalities. In Section 5, reverse Cauchy--Schwarz inequalities are obtained as consequences. Section 6 discusses special cases, including continuous, discrete and quantum forms. Finally, Section 7 presents applications to a priori estimates for Volterra-type and delay dynamic equations.

\section{Preliminaries}

In this section, we recall some basic notions and inequalities which will be
used throughout the paper. Let $\mathbb{T}$ be a time scale, that is, a
nonempty closed subset of $\mathbb{R}$. For $a,b\in\mathbb{T}$ with $a<b$, we
write
\[
    [a,b]_{\mathbb{T}}=[a,b]\cap\mathbb{T}.
\]
The basic theory of time scale calculus and dynamic inequalities may be found
in \cite{Hilger1988,BohnerPeterson2001,BohnerPeterson2003,Agarwal2014}.
Recent related developments on dynamic and fractional inequalities on time
scales can be found in \cite{RezkEtAl20XX,SarkarSenSahaHazarika2026,Sahir2025}.

Let $0\leq \alpha\leq 1$. If a function
$f:\mathbb{T}\to\mathbb{R}$ is both delta and nabla differentiable, then its
diamond-$\alpha$ derivative is defined by
\[
    f^{\diamond_{\alpha}}(t)
    =
    \alpha f^{\Delta}(t)+(1-\alpha)f^{\nabla}(t).
\]
Similarly, if $h:\mathbb{T}\to\mathbb{R}$ is delta and nabla integrable on
$[a,b]_{\mathbb{T}}$, then its diamond-$\alpha$ integral is defined by
\[
    \int_a^b h(s)\diamond_{\alpha}s
    =
    \alpha\int_a^b h(s)\Delta s
    +
    (1-\alpha)\int_a^b h(s)\nabla s.
\]
For $\alpha=1$, the above integral reduces to the delta integral, while for
$\alpha=0$, it reduces to the nabla integral.

We shall use the following refined reverse Young inequality due to Kittaneh
and Manasrah \cite{Kittaneh2010,Kittaneh2011}.

\begin{lemma}[Reverse Young inequality]
Let $\Phi,\Psi>0$ and $\delta\in[0,1]$. Then
\[
    \beta\left(\sqrt{\Phi}-\sqrt{\Psi}\right)^2
    \leq
    (1-\delta)\Phi+\delta\Psi-\Phi^{1-\delta}\Psi^{\delta}
    \leq
    \gamma\left(\sqrt{\Phi}-\sqrt{\Psi}\right)^2,
\]
where
\[
    \beta=\min\{1-\delta,\delta\},
    \qquad
    \gamma=\max\{1-\delta,\delta\}.
\]
\end{lemma}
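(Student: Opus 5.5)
The plan is to reduce the statement to two scalar inequalities in a single variable, after a normalization. Since $\Phi,\Psi>0$, set $x=\Phi/\Psi>0$ and divide every term by $\Psi$. Writing $u=\sqrt{x}>0$, the claim becomes
\[
\beta(u-1)^2\leq (1-\delta)u^2+\delta-u^{2(1-\delta)}\leq \gamma(u-1)^2 .
\]
The endpoint cases $\delta\in\{0,1\}$ are trivial: the middle expression vanishes and $\beta=0$. The upper bound then reads $0\le (u-1)^2$, which holds.

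By the symmetry $\delta\leftrightarrow 1-\delta$, $\Phi\leftrightarrow\Psi$, the middle expression and $(\sqrt\Phi-\sqrt\Psi)^2$ are both unchanged. Hence I may assume $0<\delta\leq 1/2$, so that $\beta=\delta$ and $\gamma=1-\delta$.

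\textbf{Lower bound.} Expand $\delta(\sqrt\Phi-\sqrt\Psi)^2=\delta\Phi+\delta\Psi-2\delta\sqrt{\Phi\Psi}$. Subtracting this from the middle term leaves
\[
(1-2\delta)\Phi+2\delta\sqrt{\Phi\Psi}-\Phi^{1-\delta}\Psi^{\delta}.
\]
Now apply the weighted AM--GM inequality (ordinary Young) with weights $1-2\delta$ and $2\delta$ to the numbers $\Phi$ and $\sqrt{\Phi\Psi}$. This gives
\[
\Phi^{1-2\delta}(\Phi\Psi)^{\delta}=\Phi^{1-\delta}\Psi^{\delta}\leq (1-2\delta)\Phi+2\delta\sqrt{\Phi\Psi},
\]
so the lower bound follows.

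\textbf{Upper bound.} The same idea works. Compute
\[
(1-\delta)(\sqrt\Phi-\sqrt\Psi)^2-\big[(1-\delta)\Phi+\delta\Psi-\Phi^{1-\delta}\Psi^\delta\big]
=(1-2\delta)\Psi-2(1-\delta)\sqrt{\Phi\Psi}+\Phi^{1-\delta}\Psi^{\delta}.
\]
I need this to be nonnegative, that is,
\[
2(1-\delta)\sqrt{\Phi\Psi}\leq (1-2\delta)\Psi+\Phi^{1-\delta}\Psi^{\delta}.
\]
Write $\sqrt{\Phi\Psi}$ as a weighted geometric mean of $\Psi$ and $\Phi^{1-\delta}\Psi^\delta$ with weights $\tfrac{1-2\delta}{2(1-\delta)}$ and $\tfrac{1}{2(1-\delta)}$. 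These weights sum to $1$ and are nonnegative because $\delta\le 1/2$. Check the exponent of $\Phi$: it is $\tfrac{1-\delta}{2(1-\delta)}=\tfrac12$. Check the exponent of $\Psi$: it is $\tfrac{1-2\delta+\delta}{2(1-\delta)}=\tfrac12$. So weighted AM--GM gives exactly the needed inequality after multiplying by $2(1-\delta)$.

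The main obstacle is finding these two AM--GM decompositions, that is, choosing which pair of quantities to average so that the exponents match. Everything else is routine. If the decomposition in the upper bound had failed, the fallback is the one-variable function $f(u)=(1-\delta)u^2+\delta-u^{2-2\delta}-\gamma(u-1)^2$. One checks $f(1)=f'(1)=0$ and then establishes the sign of $f$ via convexity or concavity of $f'$.
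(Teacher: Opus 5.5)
Your proof is correct and complete. The paper does not prove this lemma; it quotes it from Kittaneh and Manasrah. Your argument is the standard weighted AM--GM proof behind those results:
\begin{itemize}
\item the symmetry $\delta\leftrightarrow 1-\delta$, $\Phi\leftrightarrow\Psi$ reduces the claim to $\delta\le\tfrac12$;
\item the lower bound is weighted AM--GM applied to $\Phi$ and $\sqrt{\Phi\Psi}$ with weights $1-2\delta,\,2\delta$;
\item the upper bound is weighted AM--GM applied to $\Psi$ and $\Phi^{1-\delta}\Psi^{\delta}$ with weights $\frac{1-2\delta}{2(1-\delta)},\,\frac{1}{2(1-\delta)}$, and your exponent bookkeeping there is right.
\end{itemize}
So what your version adds over the paper is self-containedness.

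Two small trimmings are possible. The separate endpoint case is unnecessary, since at $\delta=0$ both AM--GM steps still apply (weights $1,0$ and $\tfrac12,\tfrac12$). The substitution $u=\sqrt{\Phi/\Psi}$ and the fallback function are never used in the main argument, so they can be deleted.
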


The following bounded reverse form will also be used.

\begin{lemma}
Let $\Phi,\Psi\in[m,M]\subset(0,\infty)$ and $\delta\in[0,1]$. Then
\[
    (1-\delta)\Phi+\delta\Psi-\Phi^{1-\delta}\Psi^{\delta}
    \leq
    \gamma\left(\sqrt{M}-\sqrt{m}\right)^2,
\]
where
\[
    \gamma=\max\{1-\delta,\delta\}.
\]
\end{lemma}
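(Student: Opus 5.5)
The plan is to deduce this bounded form directly from the upper half of the Reverse Young inequality stated in the preceding lemma, and then to bound the remaining square using only the constraint $\Phi,\Psi\in[m,M]$. Since $m>0$, both $\Phi$ and $\Psi$ are positive, so that lemma applies and gives
\[
(1-\delta)\Phi+\delta\Psi-\Phi^{1-\delta}\Psi^{\delta}
\leq
\gamma\left(\sqrt{\Phi}-\sqrt{\Psi}\right)^2,
\qquad
\gamma=\max\{1-\delta,\delta\}.
\]
This is the same $\gamma$ as in the statement, so it remains to show that
\[
\left(\sqrt{\Phi}-\sqrt{\Psi}\right)^2\leq\left(\sqrt{M}-\sqrt{m}\right)^2 .
\]

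The square-root function is increasing on $(0,\infty)$, so $\sqrt{\Phi}$ and $\sqrt{\Psi}$ both lie in $[\sqrt{m},\sqrt{M}]$. Two points of an interval are at distance at most its length, which gives
\[
\left|\sqrt{\Phi}-\sqrt{\Psi}\right|\leq\sqrt{M}-\sqrt{m}.
\]
Both sides are nonnegative, so squaring preserves the inequality. Since $\gamma\geq\frac12>0$, multiplying by $\gamma$ also preserves it, and chaining with the display above yields the claim.

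The only step needing care is to check the hypotheses of the preceding lemma, namely strict positivity of $\Phi$ and $\Psi$. This is guaranteed by $[m,M]\subset(0,\infty)$. The endpoint cases $\delta\in\{0,1\}$ are harmless: there the left-hand side is identically zero, and the bound holds trivially.
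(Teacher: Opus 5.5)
Your proof is correct and follows the intended route: the paper gives no separate proof of this lemma and presents it as the bounded consequence of the upper half of the preceding Kittaneh--Manasrah reverse Young inequality, combined exactly as you do with $|\sqrt{\Phi}-\sqrt{\Psi}|\leq\sqrt{M}-\sqrt{m}$ for $\Phi,\Psi\in[m,M]$. Your separate treatment of $\delta\in\{0,1\}$ is harmless but unnecessary, since the preceding lemma already covers those endpoints.
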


For $h>0$, the Kantorovich ratio is defined by
\[
    K(h)=\frac{(h+1)^2}{4h}.
\]
It is well known that $K(h)\geq1$ and $K(h)=K(1/h)$. We shall use the following
Kantorovich-type reverse Young inequality.

\begin{lemma}
Let $\Phi,\Psi>0$, $\delta\in[0,1]$ and suppose that
\[
    0<L^{-1}\leq \frac{\Phi}{\Psi}\leq L<\infty,
    \qquad L>1.
\]
Then
\[
    (1-\delta)\Phi+\delta\Psi
    \leq
    K^{\gamma}(L)\Phi^{1-\delta}\Psi^{\delta},
\]
where
\[
    \gamma=\max\{1-\delta,\delta\}.
\]
\end{lemma}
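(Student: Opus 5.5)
The plan is to reduce to a one-variable inequality, prove a sharp pointwise version with $K$ evaluated at the actual ratio, and only at the end replace that ratio by the bound $L$. Homogeneity comes first. Dividing by $\Phi>0$ and setting $x=\Psi/\Phi$, the hypothesis becomes $x\in[L^{-1},L]$. The claim then reads
\[
(1-\delta)+\delta x\le K^{\gamma}(L)\,x^{\delta}.
\]
I will first prove the pointwise bound $(1-\delta)+\delta x\le K^{\gamma}(x)\,x^{\delta}$ for every $x>0$. Afterwards I will show $K(x)\le K(L)$ on $[L^{-1},L]$. Since $K\ge1$ and $\gamma>0$, this gives $K^{\gamma}(x)\le K^{\gamma}(L)$.

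For the pointwise bound, assume first $\delta\le 1/2$, so that $\gamma=1-\delta$. Put $A=\frac{1+x}{2}$; the definition of the Kantorovich ratio gives exactly $A^2=K(x)\,x$. Set $p=2(1-\delta)\ge1$. Then two algebraic identities hold:
\[
(1-\delta)+\delta x = pA-(p-1)x,
\qquad
K^{1-\delta}(x)\,x^{\delta}=A^{p}x^{1-p}.
\]
So it suffices to show $pA\le A^{p}x^{1-p}+(p-1)x$. For this, write
\[
A=\bigl(A^{p}x^{1-p}\bigr)^{1/p}\,x^{(p-1)/p}
\]
and apply the weighted arithmetic--geometric mean inequality with weights $1/p$ and $(p-1)/p$. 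Multiplying the result by $p$ gives the required inequality. The case $p=1$, i.e.\ $\delta=1/2$, is trivial.

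The case $\delta>1/2$ follows by symmetry. Interchange the roles of $\Phi$ and $\Psi$ and replace $\delta$ by $1-\delta$; in the normalized form this replaces $x$ by $1/x$. Both $\gamma$ and $K$ are unchanged under this swap, since $K(x)=K(1/x)$. The previous case then applies directly.

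It remains to prove the monotonicity claim $K(x)\le K(L)$ on $[L^{-1},L]$. We have $K(h)=\tfrac14\,(h+2+h^{-1})$, which is increasing on $[1,\infty)$. Using the symmetry $K(h)=K(1/h)$, this gives $K(x)\le K(L)$ for all $x\in[L^{-1},L]$. Combining this with the pointwise bound and multiplying back by $\Phi$ finishes the argument.

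The main obstacle is finding the decomposition $(1-\delta)+\delta x=pA-(p-1)x$, so that the weighted AM--GM can be applied in the reverse direction. Once this identity and $A^2=K(x)x$ are in place, everything else is routine.
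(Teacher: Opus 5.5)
Your proof is correct. The identities $pA-(p-1)x=(1-\delta)+\delta x$ and $K^{1-\delta}(x)\,x^{\delta}=A^{p}x^{1-p}$ both hold, using $A^2=K(x)\,x$ and $1-p=2\delta-1$. Your weighted AM--GM step is just Bernoulli's inequality $(A/x)^p\ge 1+p(A/x-1)$ for $p\ge 1$. The reduction $(\Phi,\Psi,\delta)\mapsto(\Psi,\Phi,1-\delta)$, together with $K(x)=K(1/x)$, correctly covers $\delta>\frac12$. Finally, monotonicity of $K(h)=\frac14(h+2+h^{-1})$ on $[1,\infty)$ closes the argument.

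The paper gives no proof of this lemma. It is only quoted in the preliminaries as a known Kantorovich-type reverse Young inequality, so there is no argument to compare yours against. Your proof is self-contained and in fact establishes more than is stated. It gives the pointwise bound $(1-\delta)\Phi+\delta\Psi\le K^{\gamma}(\Psi/\Phi)\,\Phi^{1-\delta}\Psi^{\delta}$ for all $\Phi,\Psi>0$. The ratio hypothesis enters only through $K(\Psi/\Phi)\le K(L)$, and the assumption $L>1$ plays no role.

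An equivalent packaging of your key step is to write $\frac{1+x}{2}=\frac1p\bigl[(1-\delta)+\delta x\bigr]+\frac{p-1}{p}\,x$. Applying weighted AM--GM in the opposite direction then gives $\bigl(\frac{1+x}{2}\bigr)^{p}\ge\bigl[(1-\delta)+\delta x\bigr]x^{p-1}$ at once.
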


Now we introduce the kernel-type diamond-$\alpha$ integral operator used in
this paper. Let
\[
    K:[a,b]_{\mathbb{T}}\times[a,b]_{\mathbb{T}}\to[0,\infty)
\]
be a nonnegative kernel. For a suitable function
$h:[a,b]_{\mathbb{T}}\to\mathbb{R}$, define
\[
    \mathcal{I}_{K}^{\alpha}h(t)
    =
    \int_a^t K(t,s)h(s)\diamond_{\alpha}s,
    \qquad t\in[a,b]_{\mathbb{T}},
\]
provided that the integral exists and is finite.

Throughout the paper, unless otherwise stated, we assume that
$w,f,g\in C([a,b]_{\mathbb{T}},\mathbb{R})$ are diamond-$\alpha$ integrable
functions, $K(t,s)\geq0$ for $a\leq s\leq t\leq b$, and all integrals appearing
in the sequel exist and are finite. When quotient-type estimates are considered,
we also assume that there exist constants $m,M>0$ such that
\[
    0<m\leq \frac{|f(s)|}{|g(s)|}\leq M<\infty,
    \qquad s\in[a,b]_{\mathbb{T}}.
\]

\section{Main Results}

Throughout this section, let $a,b\in\mathbb{T}$ with $a<b$ and let
$t\in(a,b]_{\mathbb{T}}$ be fixed. For simplicity of notation, we put
\[
    \Omega_t(s)=K(t,s)|w(s)|,\qquad s\in[a,t]_{\mathbb{T}},
\]
where $K(t,s)\geq0$. Thus,
\[
    \mathcal{I}_{K}^{\alpha}h(t)
    =
    \int_a^t K(t,s)h(s)\diamond_{\alpha}s.
\]
All functions and integrals appearing below are assumed to be well defined and
finite.

\begin{theorem}[Kernel-type reverse Callebaut inequality]
Let $w,f,g\in C([a,b]_{\mathbb{T}},\mathbb{R})$ be diamond-$\alpha$ integrable
functions and let $K(t,s)\geq0$. If $\delta\in[0,1]$, then
\[
\begin{aligned}
&2\beta\bigg[
\mathcal{I}_{K}^{\alpha}(|w||f|^2)(t)
\mathcal{I}_{K}^{\alpha}(|w||g|^2)(t)
-
\left(\mathcal{I}_{K}^{\alpha}(|w||f||g|)(t)\right)^2
\bigg]   \\
&\leq
\mathcal{I}_{K}^{\alpha}(|w||f|^2)(t)
\mathcal{I}_{K}^{\alpha}(|w||g|^2)(t)                                      \\
&\quad -
\mathcal{I}_{K}^{\alpha}
\left(|w||f|^{2(1-\delta)}|g|^{2\delta}\right)(t)
\mathcal{I}_{K}^{\alpha}
\left(|w||f|^{2\delta}|g|^{2(1-\delta)}\right)(t)                            \\
&\leq
2\gamma\bigg[
\mathcal{I}_{K}^{\alpha}(|w||f|^2)(t)
\mathcal{I}_{K}^{\alpha}(|w||g|^2)(t)
-
\left(\mathcal{I}_{K}^{\alpha}(|w||f||g|)(t)\right)^2
\bigg],
\end{aligned}
\]
where
\[
    \beta=\min\{1-\delta,\delta\},
    \qquad
    \gamma=\max\{1-\delta,\delta\}.
\]
\end{theorem}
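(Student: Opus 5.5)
Our plan is to apply the reverse Young inequality (Lemma 2.1) pointwise in a double integral, then integrate against the nonnegative product measure $\Omega_t(s)\Omega_t(\tau)$. Fix $t$ and, for $s,\tau\in[a,t]_{\mathbb{T}}$, set
\[
\Phi=|f(s)|^2|g(\tau)|^2,\qquad \Psi=|f(\tau)|^2|g(s)|^2 .
\]
First assume these are positive. Then
\[
\Phi^{1-\delta}\Psi^{\delta}=|f(s)|^{2(1-\delta)}|g(s)|^{2\delta}\,|f(\tau)|^{2\delta}|g(\tau)|^{2(1-\delta)}
\]
and
\[
\left(\sqrt{\Phi}-\sqrt{\Psi}\right)^2=\Phi+\Psi-2|f(s)||g(s)||f(\tau)||g(\tau)|.
\]
Lemma 2.1 then gives a two-sided pointwise bound:
\[
\beta\left(\sqrt{\Phi}-\sqrt{\Psi}\right)^2\le(1-\delta)\Phi+\delta\Psi-\Phi^{1-\delta}\Psi^\delta\le\gamma\left(\sqrt{\Phi}-\sqrt{\Psi}\right)^2 .
\]
The degenerate case where some factor vanishes needs separate care. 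The cleanest fix is to note that both sides are continuous in $(\Phi,\Psi)\in[0,\infty)^2$, so the inequality extends to $\Phi,\Psi\ge0$ (with $0^0=1$ read in the natural way). Alternatively, one can replace $|f|,|g|$ by $|f|+\varepsilon$, $|g|+\varepsilon$ and let $\varepsilon\to0$ at the end using finiteness of the integrals.

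Next multiply the pointwise bound by $\Omega_t(s)\Omega_t(\tau)\ge0$. We integrate first in $s$ with respect to $\diamond_\alpha s$ over $[a,t]$, then in $\tau$. Positivity of the weight and monotonicity and linearity of the diamond-$\alpha$ integral preserve the inequalities. Each term factors because it is a product of a function of $s$ and a function of $\tau$. Writing $A=\mathcal{I}_K^\alpha(|w||f|^2)(t)$, $B=\mathcal{I}_K^\alpha(|w||g|^2)(t)$, $C=\mathcal{I}_K^\alpha(|w||f||g|)(t)$, we get
\[
\iint\Phi=\iint\Psi=AB .
\]
Hence the middle term becomes $AB-D$, where $D$ is the product of the two mixed-power integrals in the statement. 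The outer terms become $\beta(2AB-2C^2)$ and $\gamma(2AB-2C^2)$. This is exactly the claimed chain.

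The only step needing justification beyond algebra is the iterated integration. Here $\Omega_t(s)\Omega_t(\tau)$ times the integrand is a finite sum of separable products $u(s)v(\tau)$. For each such product, $\int\!\int u(s)v(\tau)\diamond_\alpha s\diamond_\alpha\tau=\left(\int u\right)\left(\int v\right)$ follows directly from linearity of the diamond-$\alpha$ integral, applied once in each variable; no Fubini theorem on time scales is required. Integrability of each factor, e.g. $K(t,\cdot)|w||f|^{2(1-\delta)}|g|^{2\delta}$, is covered by the standing assumption that all integrals appearing exist and are finite. I expect the main (mild) obstacle to be the zero-value case of $f,g$ in applying Lemma 2.1, which is stated only for $\Phi,\Psi>0$. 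I would handle it by the continuity extension described above.
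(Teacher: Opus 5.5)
Your proposal is correct and follows essentially the same route as the paper: apply Lemma 2.1 to $\Phi=|f(s)|^2|g(\tau)|^2$ and $\Psi=|f(\tau)|^2|g(s)|^2$, multiply by $\Omega_t(s)\Omega_t(\tau)\ge 0$, and integrate in both variables, so that the square term yields $2(AB-C^2)$ and the middle term yields $AB-D$. The paper leaves two points implicit, and you handle both correctly: the case where $f$ or $g$ vanishes (Lemma 2.1 is stated only for $\Phi,\Psi>0$), and the fact that the iterated integral factors by linearity alone.
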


\begin{proof}
Let $s,r\in[a,t]_{\mathbb{T}}$. Applying the reverse Young inequality to
\[
    \Phi=|f(s)|^2|g(r)|^2,
    \qquad
    \Psi=|g(s)|^2|f(r)|^2,
\]
we obtain
\[
\begin{aligned}
&\beta\left(|f(s)||g(r)|-|g(s)||f(r)|\right)^2                                      \\
&\leq
(1-\delta)|f(s)|^2|g(r)|^2
+\delta |g(s)|^2|f(r)|^2                                                            \\
&\quad
-|f(s)|^{2(1-\delta)}|g(s)|^{2\delta}
 |f(r)|^{2\delta}|g(r)|^{2(1-\delta)}                                               \\
&\leq
\gamma\left(|f(s)||g(r)|-|g(s)||f(r)|\right)^2 .
\end{aligned}
\]
Since $\Omega_t(s)\geq0$ and $\Omega_t(r)\geq0$, multiplying the above
inequality by $\Omega_t(s)\Omega_t(r)$ preserves the order. Therefore,
\[
\begin{aligned}
&\beta\Omega_t(s)\Omega_t(r)
\left(|f(s)||g(r)|-|g(s)||f(r)|\right)^2                                      \\
&\leq
\Omega_t(s)\Omega_t(r)
\bigg[
(1-\delta)|f(s)|^2|g(r)|^2
+\delta |g(s)|^2|f(r)|^2                                                        \\
&\quad
-|f(s)|^{2(1-\delta)}|g(s)|^{2\delta}
 |f(r)|^{2\delta}|g(r)|^{2(1-\delta)}
\bigg]                                                                         \\
&\leq
\gamma\Omega_t(s)\Omega_t(r)
\left(|f(s)||g(r)|-|g(s)||f(r)|\right)^2 .
\end{aligned}
\]
Now integrate the above inequality first with respect to $s$ and then with
respect to $r$ over $[a,t]_{\mathbb{T}}$ using the diamond-$\alpha$ integral.

The square term gives
\[
\begin{aligned}
&\int_a^t\int_a^t
\Omega_t(s)\Omega_t(r)
\left(|f(s)||g(r)|-|g(s)||f(r)|\right)^2
\diamond_{\alpha}s\diamond_{\alpha}r                                      \\
&=
2\bigg[
\mathcal{I}_{K}^{\alpha}(|w||f|^2)(t)
\mathcal{I}_{K}^{\alpha}(|w||g|^2)(t)
-
\left(\mathcal{I}_{K}^{\alpha}(|w||f||g|)(t)\right)^2
\bigg].
\end{aligned}
\]
Moreover, the middle term becomes
\[
\begin{aligned}
&
\mathcal{I}_{K}^{\alpha}(|w||f|^2)(t)
\mathcal{I}_{K}^{\alpha}(|w||g|^2)(t)                                      \\
&\quad -
\mathcal{I}_{K}^{\alpha}
\left(|w||f|^{2(1-\delta)}|g|^{2\delta}\right)(t)
\mathcal{I}_{K}^{\alpha}
\left(|w||f|^{2\delta}|g|^{2(1-\delta)}\right)(t).
\end{aligned}
\]
Combining these identities, we arrive at the desired inequality.
\end{proof}

\begin{remark}
If $K(t,s)=1$ and $t=b$, then the above theorem reduces to the usual
diamond-$\alpha$ reverse Callebaut inequality on time scales. Thus, the result
is a genuine kernel-type extension of the known dynamic inequality.
\end{remark}

\begin{remark}
The nonnegativity assumption $K(t,s)\geq0$ is essential. It guarantees that the
pointwise reverse Young inequality can be integrated without changing the
direction of the inequality.
\end{remark}

\begin{theorem}[Bounded kernel-type reverse Callebaut inequality]
Assume that the hypotheses of the previous theorem hold. Suppose further that
there exist constants $m,M>0$ such that
\[
    0<m\leq \frac{|f(s)|}{|g(s)|}\leq M<\infty,
    \qquad s\in[a,t]_{\mathbb{T}}.
\]
Then, for $\delta\in[0,1]$,
\[
\begin{aligned}
&
\mathcal{I}_{K}^{\alpha}(|w||f|^2)(t)
\mathcal{I}_{K}^{\alpha}(|w||g|^2)(t)                                      \\
&\quad -
\mathcal{I}_{K}^{\alpha}
\left(|w||f|^{2(1-\delta)}|g|^{2\delta}\right)(t)
\mathcal{I}_{K}^{\alpha}
\left(|w||f|^{2\delta}|g|^{2(1-\delta)}\right)(t)                            \\
&\leq
\gamma(M-m)^2
\left[
\mathcal{I}_{K}^{\alpha}(|w||g|^2)(t)
\right]^2,
\end{aligned}
\]
where
\[
    \gamma=\max\{1-\delta,\delta\}.
\]
\end{theorem}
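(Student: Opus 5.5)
The plan is to reuse the double-integration device from the proof of the previous theorem, but to bound the square term pointwise using the quotient hypothesis rather than integrating it exactly.

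\textbf{Pointwise estimate.} Fix $s,r\in[a,t]_{\mathbb{T}}$ and set, exactly as before,
\[
\Phi=|f(s)|^2|g(r)|^2,\qquad \Psi=|g(s)|^2|f(r)|^2 .
\]
The hypothesis $m\le |f|/|g|\le M$ forces $g(s)\neq 0$ and $f(s)\neq0$ on $[a,t]_{\mathbb{T}}$, so $\Phi,\Psi>0$. The right half of the reverse Young inequality (Lemma 2.1) then gives
\[
(1-\delta)\Phi+\delta\Psi-\Phi^{1-\delta}\Psi^{\delta}\le \gamma\bigl(|f(s)||g(r)|-|g(s)||f(r)|\bigr)^2 .
\]
Write $x(u)=|f(u)|/|g(u)|\in[m,M]$. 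Then
\[
|f(s)||g(r)|-|g(s)||f(r)|=|g(s)||g(r)|\bigl(x(s)-x(r)\bigr),
\]
and $|x(s)-x(r)|\le M-m$. Hence the right-hand side is at most $\gamma(M-m)^2|g(s)|^2|g(r)|^2$.

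I deliberately avoid Lemma 2.2 here. Applied with the bounds on $\Phi,\Psi$, it would produce a constant involving $\sqrt{M}-\sqrt{m}$ and $\sup|g|$, not the clean factor $(M-m)^2\bigl[\mathcal{I}_K^\alpha(|w||g|^2)\bigr]^2$ in the statement.

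\textbf{Integration.} Multiply the pointwise inequality by $\Omega_t(s)\Omega_t(r)\ge0$; this preserves the order because $K\ge0$. Then integrate in $s$ and in $r$ over $[a,t]_{\mathbb{T}}$ with the diamond-$\alpha$ integral.

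On the right, the integrand factorises as $\bigl(\Omega_t(s)|g(s)|^2\bigr)\bigl(\Omega_t(r)|g(r)|^2\bigr)$. The iterated integral is therefore $\gamma(M-m)^2\bigl[\mathcal{I}_K^\alpha(|w||g|^2)(t)\bigr]^2$.

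On the left, each term is again a product of a function of $s$ and a function of $r$:
\begin{itemize}
\item $(1-\delta)\Phi$ integrates to $(1-\delta)\,\mathcal{I}_K^\alpha(|w||f|^2)(t)\,\mathcal{I}_K^\alpha(|w||g|^2)(t)$.
\item $\delta\Psi$ integrates to $\delta$ times the same product.
\item Together these two terms give exactly $\mathcal{I}_K^\alpha(|w||f|^2)(t)\,\mathcal{I}_K^\alpha(|w||g|^2)(t)$.
\item The geometric-mean term $\Phi^{1-\delta}\Psi^{\delta}$ splits as $|f(s)|^{2(1-\delta)}|g(s)|^{2\delta}\cdot|f(r)|^{2\delta}|g(r)|^{2(1-\delta)}$. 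It yields the product $\mathcal{I}_K^\alpha(|w||f|^{2(1-\delta)}|g|^{2\delta})(t)\,\mathcal{I}_K^\alpha(|w||f|^{2\delta}|g|^{2(1-\delta)})(t)$.
\end{itemize}
Combining both sides gives the asserted inequality.

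\textbf{Expected obstacle.} No step is genuinely hard; the one point needing care is the justification of the double integration. Since every integrand is a finite sum of products $F(s)G(r)$, the iterated diamond-$\alpha$ integral equals the product of the single integrals. This follows from linearity and the standing assumption that all integrals exist and are finite, so no general Fubini theorem on time scales is required. The remaining detail is the observation that the quotient hypothesis excludes zeros of $g$, which is what makes the substitution $x=|f|/|g|$ legitimate.
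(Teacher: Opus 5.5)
Your proof is correct and is essentially the paper's argument. The paper instead divides by $|g(s)|^2|g(r)|^2$ and applies Lemma 2.2 to the quotients $|f(s)|^2/|g(s)|^2,\ |f(r)|^2/|g(r)|^2\in[m^2,M^2]$, which gives $\gamma(M-m)^2$ directly; after multiplying back this is exactly your pointwise bound, and the double integration is identical. So your concern about Lemma 2.2 arises only from your unnormalized choice of $\Phi,\Psi$, and the two routes differ only cosmetically.
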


\begin{proof}
For $s,r\in[a,t]_{\mathbb{T}}$, define
\[
    \Phi=\frac{|f(s)|^2}{|g(s)|^2},
    \qquad
    \Psi=\frac{|f(r)|^2}{|g(r)|^2}.
\]
By the assumed quotient condition,
\[
    m^2\leq \Phi,\Psi\leq M^2.
\]
Using the bounded reverse Young inequality, we obtain
\[
\begin{aligned}
&(1-\delta)\frac{|f(s)|^2}{|g(s)|^2}
+\delta\frac{|f(r)|^2}{|g(r)|^2}
-\left(\frac{|f(s)|^2}{|g(s)|^2}\right)^{1-\delta}
 \left(\frac{|f(r)|^2}{|g(r)|^2}\right)^{\delta}       \\
&\leq
\gamma(M-m)^2.
\end{aligned}
\]
Multiplying both sides by $|g(s)|^2|g(r)|^2$, we get
\[
\begin{aligned}
&(1-\delta)|f(s)|^2|g(r)|^2
+\delta |g(s)|^2|f(r)|^2                                      \\
&\quad
-|f(s)|^{2(1-\delta)}|g(s)|^{2\delta}
 |f(r)|^{2\delta}|g(r)|^{2(1-\delta)}                         \\
&\leq
\gamma(M-m)^2|g(s)|^2|g(r)|^2.
\end{aligned}
\]
Now multiply by $\Omega_t(s)\Omega_t(r)$ and integrate with respect to $s$ and
$r$ over $[a,t]_{\mathbb{T}}$. The left-hand side becomes
\[
\begin{aligned}
&
\mathcal{I}_{K}^{\alpha}(|w||f|^2)(t)
\mathcal{I}_{K}^{\alpha}(|w||g|^2)(t)                                      \\
&\quad -
\mathcal{I}_{K}^{\alpha}
\left(|w||f|^{2(1-\delta)}|g|^{2\delta}\right)(t)
\mathcal{I}_{K}^{\alpha}
\left(|w||f|^{2\delta}|g|^{2(1-\delta)}\right)(t),
\end{aligned}
\]
while the right-hand side becomes
\[
    \gamma(M-m)^2
    \left[
    \mathcal{I}_{K}^{\alpha}(|w||g|^2)(t)
    \right]^2.
\]
Hence the required inequality follows.
\end{proof}

\begin{remark}
Taking $\delta=\frac12$ in the previous theorem gives a kernel-type reverse
Cauchy--Schwarz inequality.
\end{remark}

\begin{corollary}[Kernel-type reverse Cauchy--Schwarz inequality]
Under the hypotheses of Theorem 3.2, one has
\[
\begin{aligned}
&
\mathcal{I}_{K}^{\alpha}(|w||f|^2)(t)
\mathcal{I}_{K}^{\alpha}(|w||g|^2)(t)
-
\left[
\mathcal{I}_{K}^{\alpha}(|w||f||g|)(t)
\right]^2                                                    \\
&\leq
\frac12(M-m)^2
\left[
\mathcal{I}_{K}^{\alpha}(|w||g|^2)(t)
\right]^2.
\end{aligned}
\]
\end{corollary}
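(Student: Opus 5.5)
The plan is to specialize Theorem 3.2 to $\delta=\frac12$. Then $\gamma=\max\{\frac12,\frac12\}=\frac12$, and the two interpolated integrands coincide:
\[
|w||f|^{2(1-\delta)}|g|^{2\delta}=|w||f|^{2\delta}|g|^{2(1-\delta)}=|w||f||g|.
\]
So the product on the left-hand side of Theorem 3.2 becomes $\left[\mathcal{I}_{K}^{\alpha}(|w||f||g|)(t)\right]^2$. The right-hand side becomes $\frac12(M-m)^2\left[\mathcal{I}_{K}^{\alpha}(|w||g|^2)(t)\right]^2$, which is exactly the stated inequality.

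The only step that needs care is confirming that the constant in Theorem 3.2 is legitimate, since the corollary inherits it.

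\begin{itemize}
\item In the proof of Theorem 3.2, the bounded reverse Young lemma is applied with $\Phi,\Psi\in[m^2,M^2]$. The lemma then gives the bound $\gamma(\sqrt{M^2}-\sqrt{m^2})^2=\gamma(M-m)^2$, so the constant is consistent.
\item At $\delta=\frac12$, the pointwise inequality reads
\[
\tfrac12\Phi+\tfrac12\Psi-\sqrt{\Phi\Psi}=\tfrac12\left(\sqrt\Phi-\sqrt\Psi\right)^2\le \tfrac12(M-m)^2,
\]
which can also be verified directly as a check.
\end{itemize}

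Multiplying this pointwise bound by $|g(s)|^2|g(r)|^2\,\Omega_t(s)\Omega_t(r)\ge0$ and integrating over $[a,t]_{\mathbb{T}}^2$ reproduces the corollary. If one prefers to avoid citing Theorem 3.2, this gives an independent derivation. It uses only $K\ge0$ and the factorization of the double diamond-$\alpha$ integral of a product of a function of $s$ and a function of $r$ into the product of single integrals.

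No real obstacle is expected. The only subtle point is the implicit assumption $g\neq0$ on $[a,t]_{\mathbb{T}}$, needed for the quotient hypothesis to make sense. I would note that this is already part of the hypotheses of Theorem 3.2.
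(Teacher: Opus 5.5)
Your proposal is correct and follows the paper's route: the paper gets this corollary simply by setting $\delta=\tfrac12$ in Theorem 3.2, where $\gamma=\tfrac12$ and both interpolated integrands collapse to $|w||f||g|$. Your direct pointwise check $\tfrac12(\sqrt{\Phi}-\sqrt{\Psi})^2\le\tfrac12(M-m)^2$ is just the proof of Theorem 3.2 specialized to this $\delta$, so it adds a sanity check rather than a different argument.
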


\begin{theorem}[Kernel-type reverse Rogers--H\"older inequality]
Let $p,q>1$ with
\[
    \frac1p+\frac1q=1.
\]
Assume that
\[
    A(t):=\mathcal{I}_{K}^{\alpha}(|w||f|^p)(t)>0,
    \qquad
    B(t):=\mathcal{I}_{K}^{\alpha}(|w||g|^q)(t)>0.
\]
Then
\[
\begin{aligned}
&2\beta
\left[
\sqrt{A(t)B(t)}
-
\mathcal{I}_{K}^{\alpha}
\left(|w||f|^{p/2}|g|^{q/2}\right)(t)
\right]
A(t)^{\frac1p-\frac12}
B(t)^{\frac1q-\frac12}                                      \\
&\leq
A(t)^{1/p}B(t)^{1/q}
-
\mathcal{I}_{K}^{\alpha}(|w||f||g|)(t)                       \\
&\leq
2\gamma
\left[
\sqrt{A(t)B(t)}
-
\mathcal{I}_{K}^{\alpha}
\left(|w||f|^{p/2}|g|^{q/2}\right)(t)
\right]
A(t)^{\frac1p-\frac12}
B(t)^{\frac1q-\frac12},
\end{aligned}
\]
where
\[
    \beta=\min\left\{\frac1p,\frac1q\right\},
    \qquad
    \gamma=\max\left\{\frac1p,\frac1q\right\}.
\]
\end{theorem}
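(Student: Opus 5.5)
The plan is to apply the reverse Young inequality (Lemma 2.1) pointwise to suitably normalized quantities, multiply by the nonnegative weight $\Omega_t(s)=K(t,s)|w(s)|$, and integrate once in $s$ over $[a,t]_{\mathbb{T}}$. Unlike the Callebaut theorem, no double integral is needed. Fix $t$ and write $A=A(t)$, $B=B(t)$, which are positive by hypothesis. For $s\in[a,t]_{\mathbb{T}}$ we set
\[
\Phi=\frac{|f(s)|^p}{A},\qquad \Psi=\frac{|g(s)|^q}{B},\qquad \delta=\frac1q ,
\]
so that $1-\delta=\frac1p$. Then $\beta=\min\{1-\delta,\delta\}=\min\{\frac1p,\frac1q\}$ and $\gamma=\max\{\frac1p,\frac1q\}$, which are exactly the constants in the statement. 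Lemma 2.1 then gives
\[
\beta\Big(\tfrac{|f(s)|^{p/2}}{\sqrt A}-\tfrac{|g(s)|^{q/2}}{\sqrt B}\Big)^2
\le \frac{|f(s)|^p}{pA}+\frac{|g(s)|^q}{qB}-\frac{|f(s)||g(s)|}{A^{1/p}B^{1/q}}
\le \gamma\Big(\tfrac{|f(s)|^{p/2}}{\sqrt A}-\tfrac{|g(s)|^{q/2}}{\sqrt B}\Big)^2 .
\]
Here we use $\Phi^{1/p}\Psi^{1/q}=|f(s)||g(s)|/(A^{1/p}B^{1/q})$.

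Next we multiply by $\Omega_t(s)\ge 0$, which preserves the order, and integrate with the diamond-$\alpha$ integral over $[a,t]_{\mathbb{T}}$. By linearity and the definitions of $A$ and $B$, the middle term becomes
\[
\frac1p+\frac1q-\frac{\mathcal{I}_{K}^{\alpha}(|w||f||g|)(t)}{A^{1/p}B^{1/q}}
=1-\frac{\mathcal{I}_{K}^{\alpha}(|w||f||g|)(t)}{A^{1/p}B^{1/q}} .
\]
Expanding the square, the outer terms become
\[
1+1-\frac{2\,\mathcal{I}_{K}^{\alpha}(|w||f|^{p/2}|g|^{q/2})(t)}{\sqrt{AB}}
=\frac{2}{\sqrt{AB}}\Big[\sqrt{AB}-\mathcal{I}_{K}^{\alpha}(|w||f|^{p/2}|g|^{q/2})(t)\Big],
\]
multiplied by $\beta$ and by $\gamma$ respectively.

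Finally we multiply the resulting chain by the positive number $A^{1/p}B^{1/q}$. The identity
\[
\frac{A^{1/p}B^{1/q}}{\sqrt{AB}}=A^{\frac1p-\frac12}B^{\frac1q-\frac12}
\]
then turns the chain into the asserted inequality.

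The computation itself is routine; the only delicate point is that Lemma 2.1 is stated for $\Phi,\Psi>0$, whereas $f$ or $g$ may vanish at some points $s$. I would handle this by noting that both sides of the pointwise inequality are continuous in $(\Phi,\Psi)\in[0,\infty)^2$. The inequality therefore extends to $\Phi,\Psi\ge0$ by letting $\Phi\to0^+$ or $\Psi\to0^+$. Alternatively one can check it directly: if $\Psi=0$ the middle term is $(1-\delta)\Phi$ and the bound reduces to $\beta\le 1-\delta\le\gamma$. After this extension, the integration step is justified by positivity of the kernel and the standing assumption that all integrals exist and are finite.
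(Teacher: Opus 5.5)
Your proof is correct and is essentially the paper's argument. The only difference is that the paper builds the weight into the normalized quantities, taking $\Phi(s)=\Omega_t(s)|f(s)|^p/A(t)$ and $\Psi(s)=\Omega_t(s)|g(s)|^q/B(t)$ so that $\int_a^t\Phi\diamond_\alpha s=\int_a^t\Psi\diamond_\alpha s=1$, whereas you multiply by $\Omega_t(s)$ after applying Lemma 2.1; this is equivalent because every term of that inequality is positively homogeneous of degree one in $(\Phi,\Psi)$, and your extension of Lemma 2.1 to $\Phi,\Psi\ge 0$ (using $0<1/q<1$) covers a point the paper passes over silently.
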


\begin{proof}
Put
\[
    A(t)=\mathcal{I}_{K}^{\alpha}(|w||f|^p)(t),
    \qquad
    B(t)=\mathcal{I}_{K}^{\alpha}(|w||g|^q)(t).
\]
For $s\in[a,t]_{\mathbb{T}}$, define
\[
    \Phi(s)=\frac{\Omega_t(s)|f(s)|^p}{A(t)},
    \qquad
    \Psi(s)=\frac{\Omega_t(s)|g(s)|^q}{B(t)}.
\]
Since $A(t)>0$ and $B(t)>0$, the above quantities are well defined and
nonnegative. Applying the refined reverse Young inequality with
\[
    \delta=\frac1q,
    \qquad
    1-\delta=\frac1p,
\]
we obtain
\[
\begin{aligned}
&\beta\left(\sqrt{\Phi(s)}-\sqrt{\Psi(s)}\right)^2     \\
&\leq
\frac1p\Phi(s)+\frac1q\Psi(s)
-\Phi(s)^{1/p}\Psi(s)^{1/q}                            \\
&\leq
\gamma\left(\sqrt{\Phi(s)}-\sqrt{\Psi(s)}\right)^2 .
\end{aligned}
\]
Integrating over $[a,t]_{\mathbb{T}}$, we get
\[
\begin{aligned}
&\beta\int_a^t
\left(\sqrt{\Phi(s)}-\sqrt{\Psi(s)}\right)^2
\diamond_{\alpha}s                                      \\
&\leq
\int_a^t
\left[
\frac1p\Phi(s)+\frac1q\Psi(s)
-\Phi(s)^{1/p}\Psi(s)^{1/q}
\right]\diamond_{\alpha}s                               \\
&\leq
\gamma\int_a^t
\left(\sqrt{\Phi(s)}-\sqrt{\Psi(s)}\right)^2
\diamond_{\alpha}s .
\end{aligned}
\]
Now,
\[
    \int_a^t \Phi(s)\diamond_{\alpha}s=1,
    \qquad
    \int_a^t \Psi(s)\diamond_{\alpha}s=1.
\]
Therefore,
\[
\begin{aligned}
\int_a^t
\left(\sqrt{\Phi(s)}-\sqrt{\Psi(s)}\right)^2
\diamond_{\alpha}s
&=
2-2\int_a^t\sqrt{\Phi(s)\Psi(s)}\diamond_{\alpha}s       \\
&=
2\left[
1-
\frac{
\mathcal{I}_{K}^{\alpha}
\left(|w||f|^{p/2}|g|^{q/2}\right)(t)
}
{\sqrt{A(t)B(t)}}
\right].
\end{aligned}
\]
Also,
\[
\begin{aligned}
&\int_a^t
\left[
\frac1p\Phi(s)+\frac1q\Psi(s)
-\Phi(s)^{1/p}\Psi(s)^{1/q}
\right]\diamond_{\alpha}s                                \\
&=
1-
\frac{
\mathcal{I}_{K}^{\alpha}(|w||f||g|)(t)
}
{
A(t)^{1/p}B(t)^{1/q}
}.
\end{aligned}
\]
Hence,
\[
\begin{aligned}
&2\beta
\left[
1-
\frac{
\mathcal{I}_{K}^{\alpha}
\left(|w||f|^{p/2}|g|^{q/2}\right)(t)
}
{\sqrt{A(t)B(t)}}
\right]                                                   \\
&\leq
1-
\frac{
\mathcal{I}_{K}^{\alpha}(|w||f||g|)(t)
}
{
A(t)^{1/p}B(t)^{1/q}
}                                                         \\
&\leq
2\gamma
\left[
1-
\frac{
\mathcal{I}_{K}^{\alpha}
\left(|w||f|^{p/2}|g|^{q/2}\right)(t)
}
{\sqrt{A(t)B(t)}}
\right].
\end{aligned}
\]
Multiplying throughout by $A(t)^{1/p}B(t)^{1/q}$ gives
\[
\begin{aligned}
&2\beta
\left[
\sqrt{A(t)B(t)}
-
\mathcal{I}_{K}^{\alpha}
\left(|w||f|^{p/2}|g|^{q/2}\right)(t)
\right]
A(t)^{\frac1p-\frac12}
B(t)^{\frac1q-\frac12}                                      \\
&\leq
A(t)^{1/p}B(t)^{1/q}
-
\mathcal{I}_{K}^{\alpha}(|w||f||g|)(t)                       \\
&\leq
2\gamma
\left[
\sqrt{A(t)B(t)}
-
\mathcal{I}_{K}^{\alpha}
\left(|w||f|^{p/2}|g|^{q/2}\right)(t)
\right]
A(t)^{\frac1p-\frac12}
B(t)^{\frac1q-\frac12}.
\end{aligned}
\]
This completes the proof.
\end{proof}

\begin{remark}
For $p=q=2$, the above result reduces to the reverse Cauchy--Schwarz gap in the
kernel-type diamond-$\alpha$ framework.
\end{remark}

\begin{remark}
If $K(t,s)=1$, then the theorem gives the standard diamond-$\alpha$ reverse
Rogers--H\"older inequality. If, in addition, $\mathbb{T}=\mathbb{R}$ or
$\mathbb{T}=\mathbb{Z}$, the continuous or discrete versions are recovered,
respectively.
\end{remark}

\begin{theorem}[Bounded reverse Rogers--H\"older inequality]
Let $p,q>1$ with $1/p+1/q=1$. Assume that
\[
    \mathcal{I}_{K}^{\alpha}(|w|)(t)=1.
\]
Suppose further that there exist constants $m,M,n,N>0$ such that
\[
    0<m\leq |f(s)|\leq M<\infty,
    \qquad
    0<n\leq |g(s)|\leq N<\infty,
\]
for all $s\in[a,t]_{\mathbb{T}}$. Then
\[
\begin{aligned}
0
&\leq
\left[
\mathcal{I}_{K}^{\alpha}(|w||f|^p)(t)
\right]^{1/p}
\left[
\mathcal{I}_{K}^{\alpha}(|w||g|^q)(t)
\right]^{1/q}
-
\mathcal{I}_{K}^{\alpha}(|w||f||g|)(t)                         \\
&\leq
\gamma \Lambda^2
\left[
\mathcal{I}_{K}^{\alpha}(|w||f|^p)(t)
\right]^{1/p}
\left[
\mathcal{I}_{K}^{\alpha}(|w||g|^q)(t)
\right]^{1/q},
\end{aligned}
\]
where
\[
    \gamma=\max\left\{\frac1p,\frac1q\right\}
\]
and
\[
    \Lambda=
    \max\left\{
    \left(\frac{M}{m}\right)^{p/2},
    \left(\frac{N}{n}\right)^{q/2}
    \right\}
    -
    \min\left\{
    \left(\frac{m}{M}\right)^{p/2},
    \left(\frac{n}{N}\right)^{q/2}
    \right\}.
\]
\end{theorem}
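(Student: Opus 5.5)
The plan is to combine the normalization already used in the proof of Theorem 3.4 with the bounded reverse Young inequality (Lemma 2.2). We abbreviate $A=A(t)=\mathcal{I}_{K}^{\alpha}(|w||f|^p)(t)$ and $B=B(t)=\mathcal{I}_{K}^{\alpha}(|w||g|^q)(t)$. Since $|f|\geq m>0$, $|g|\geq n>0$ and $\mathcal{I}_{K}^{\alpha}(|w|)(t)=1$, both quantities are strictly positive. More precisely,
\[
m^p\le A\le M^p,\qquad n^q\le B\le N^q .
\]

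\textbf{Lower bound.} The inequality $0\leq A^{1/p}B^{1/q}-\mathcal{I}_{K}^{\alpha}(|w||f||g|)(t)$ is the kernel-type Hölder inequality. It follows from the left-hand inequality of Theorem 3.4, since $\beta\ge 0$ and $\sqrt{AB}\ge \mathcal{I}_{K}^{\alpha}(|w||f|^{p/2}|g|^{q/2})(t)$. The latter is Cauchy--Schwarz for the positive functional $h\mapsto\int_a^t\Omega_t h\diamond_\alpha s$. Alternatively, it follows directly from the weighted AM--GM inequality applied to the normalized quantities below.

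\textbf{Upper bound.} Here we do not normalize by $\Omega_t$ inside $\Phi,\Psi$ as in Theorem 3.4. Instead we use the probability weight $\Omega_t(s)\,\diamond_\alpha s$, which is legitimate because $\mathcal{I}_{K}^{\alpha}(|w|)(t)=1$, and we set
\[
\Phi(s)=\frac{|f(s)|^p}{A},\qquad \Psi(s)=\frac{|g(s)|^q}{B}.
\]
From the bounds on $|f|$ and on $A$ we get $(m/M)^p\le\Phi(s)\le (M/m)^p$. Similarly, $(n/N)^q\le\Psi(s)\le(N/n)^q$.

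Both $\Phi(s)$ and $\Psi(s)$ therefore lie in a common interval $[\mu,\nu]$, where
\[
\mu=\min\{(m/M)^p,(n/N)^q\},\qquad \nu=\max\{(M/m)^p,(N/n)^q\}.
\]
Taking square roots gives $\sqrt{\nu}-\sqrt{\mu}=\Lambda$, exactly as defined in the statement.

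We then apply Lemma 2.2 with $\delta=1/q$ and $1-\delta=1/p$ to get the pointwise estimate
\[
\tfrac1p\Phi(s)+\tfrac1q\Psi(s)-\Phi(s)^{1/p}\Psi(s)^{1/q}\le\gamma\Lambda^2 .
\]
Next we multiply by $\Omega_t(s)\ge0$ and integrate over $[a,t]_{\mathbb{T}}$. On the left, the weights give $\int_a^t\Omega_t\Phi\,\diamond_\alpha s=\int_a^t\Omega_t\Psi\,\diamond_\alpha s=1$, so the first two terms contribute $1/p+1/q=1$. The cross term becomes $\mathcal{I}_{K}^{\alpha}(|w||f||g|)(t)/(A^{1/p}B^{1/q})$. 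On the right, the normalization $\mathcal{I}_{K}^{\alpha}(|w|)(t)=1$ turns the constant $\gamma\Lambda^2$ into exactly $\gamma\Lambda^2$. We obtain
\[
1-\frac{\mathcal{I}_{K}^{\alpha}(|w||f||g|)(t)}{A^{1/p}B^{1/q}}\le\gamma\Lambda^2 .
\]
Multiplying through by $A^{1/p}B^{1/q}>0$ yields the claimed upper bound.

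\textbf{Main obstacle.} The delicate step is getting the range of $\Phi,\Psi$ right so that Lemma 2.2 applies with a single interval $[\mu,\nu]$. This requires using the two-sided bounds on $A$ and $B$, which come from the normalization $\mathcal{I}_{K}^{\alpha}(|w|)(t)=1$. It also requires checking that $\sqrt{\nu}-\sqrt{\mu}$ matches $\Lambda$ in the statement, with exponents $p/2$ and $q/2$ after taking square roots. If one mistakenly kept $\Omega_t$ inside $\Phi$ as in Theorem 3.4, these pointwise bounds would fail, so the choice of weighting is the key point.
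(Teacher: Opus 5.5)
Your proposal is correct and matches the paper's proof essentially step for step. Both use the normalization $\Phi=|f|^p/A(t)$, $\Psi=|g|^q/B(t)$ without $\Omega_t$ inside. Both use the two-sided bounds $m^p\le A(t)\le M^p$ and $n^q\le B(t)\le N^q$, which place $\Phi,\Psi$ in a common interval $[\mu,\nu]$ with $\sqrt{\nu}-\sqrt{\mu}=\Lambda$. Both then apply the bounded reverse Young inequality with $\delta=1/q$ and integrate against $\Omega_t$ using $\mathcal{I}_{K}^{\alpha}(|w|)(t)=1$. The only difference is the lower bound: the paper simply cites the kernel-type H\"older inequality, while you derive it, either by weighted AM--GM or via the left inequality of the unbounded kernel-type reverse Rogers--H\"older theorem plus Cauchy--Schwarz; both derivations are valid and neither is circular.
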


\begin{proof}
Let
\[
    A(t)=\mathcal{I}_{K}^{\alpha}(|w||f|^p)(t),
    \qquad
    B(t)=\mathcal{I}_{K}^{\alpha}(|w||g|^q)(t).
\]
Since $\mathcal{I}_{K}^{\alpha}(|w|)(t)=1$ and
$m\leq |f(s)|\leq M$, we have
\[
    m^p\leq A(t)\leq M^p.
\]
Similarly,
\[
    n^q\leq B(t)\leq N^q.
\]
Thus,
\[
    \left(\frac{m}{M}\right)^p
    \leq
    \frac{|f(s)|^p}{A(t)}
    \leq
    \left(\frac{M}{m}\right)^p,
\]
and
\[
    \left(\frac{n}{N}\right)^q
    \leq
    \frac{|g(s)|^q}{B(t)}
    \leq
    \left(\frac{N}{n}\right)^q.
\]
Put
\[
    \Phi(s)=\frac{|f(s)|^p}{A(t)},
    \qquad
    \Psi(s)=\frac{|g(s)|^q}{B(t)}.
\]
Then both $\Phi(s)$ and $\Psi(s)$ are bounded between the two positive
quantities
\[
    \min\left\{
    \left(\frac{m}{M}\right)^p,
    \left(\frac{n}{N}\right)^q
    \right\}
\]
and
\[
    \max\left\{
    \left(\frac{M}{m}\right)^p,
    \left(\frac{N}{n}\right)^q
    \right\}.
\]
Using the bounded reverse Young inequality with $\delta=1/q$, we get
\[
\begin{aligned}
&
\frac1p\Phi(s)+\frac1q\Psi(s)
-\Phi(s)^{1/p}\Psi(s)^{1/q}                                      \\
&\leq
\gamma
\left[
\max\left\{
\left(\frac{M}{m}\right)^{p/2},
\left(\frac{N}{n}\right)^{q/2}
\right\}
-
\min\left\{
\left(\frac{m}{M}\right)^{p/2},
\left(\frac{n}{N}\right)^{q/2}
\right\}
\right]^2                                                        \\
&=
\gamma\Lambda^2.
\end{aligned}
\]
Multiplying by $\Omega_t(s)$ and integrating over $[a,t]_{\mathbb{T}}$, we
obtain
\[
\begin{aligned}
&
\int_a^t \Omega_t(s)
\left[
\frac1p\Phi(s)+\frac1q\Psi(s)
-\Phi(s)^{1/p}\Psi(s)^{1/q}
\right]\diamond_{\alpha}s                                      \\
&\leq
\gamma\Lambda^2
\int_a^t \Omega_t(s)\diamond_{\alpha}s.
\end{aligned}
\]
Since $\mathcal{I}_{K}^{\alpha}(|w|)(t)=1$, the right-hand side becomes
$\gamma\Lambda^2$. On the other hand,
\[
\begin{aligned}
&\int_a^t \Omega_t(s)
\left[
\frac1p\Phi(s)+\frac1q\Psi(s)
-\Phi(s)^{1/p}\Psi(s)^{1/q}
\right]\diamond_{\alpha}s                                      \\
&=
1-
\frac{
\mathcal{I}_{K}^{\alpha}(|w||f||g|)(t)
}
{
A(t)^{1/p}B(t)^{1/q}
}.
\end{aligned}
\]
Therefore,
\[
    1-
\frac{
\mathcal{I}_{K}^{\alpha}(|w||f||g|)(t)
}
{
A(t)^{1/p}B(t)^{1/q}
}
\leq
\gamma\Lambda^2.
\]
Multiplying by $A(t)^{1/p}B(t)^{1/q}$ gives the required upper estimate.
The nonnegativity of the left-hand side follows from the usual H\"older
inequality in the kernel-type diamond-$\alpha$ setting.
\end{proof}

\begin{remark}
The normalization condition
\[
    \mathcal{I}_{K}^{\alpha}(|w|)(t)=1
\]
means that the kernel-weighted measure is normalized. If this condition is not
satisfied, one may normalize the weight by replacing $|w(s)|$ with
\[
    \frac{|w(s)|}
    {\mathcal{I}_{K}^{\alpha}(|w|)(t)}.
\]
\end{remark}

\begin{corollary}[Bounded reverse Cauchy--Schwarz inequality]
Assume that the hypotheses of Theorem 3.4 hold with $p=q=2$. Then
\[
\begin{aligned}
0
&\leq
\left[
\mathcal{I}_{K}^{\alpha}(|w||f|^2)(t)
\right]^{1/2}
\left[
\mathcal{I}_{K}^{\alpha}(|w||g|^2)(t)
\right]^{1/2}
-
\mathcal{I}_{K}^{\alpha}(|w||f||g|)(t)                         \\
&\leq
\frac12
\left[
\max\left\{
\frac{M}{m},\frac{N}{n}
\right\}
-
\min\left\{
\frac{m}{M},\frac{n}{N}
\right\}
\right]^2
\left[
\mathcal{I}_{K}^{\alpha}(|w||f|^2)(t)
\right]^{1/2}
\left[
\mathcal{I}_{K}^{\alpha}(|w||g|^2)(t)
\right]^{1/2}.
\end{aligned}
\]
\end{corollary}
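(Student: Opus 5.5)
The corollary is the special case $p=q=2$ of Theorem 3.4, so the plan is simply to specialise that theorem and check that every quantity reduces to the stated form. With $p=q=2$ the conjugacy condition $1/p+1/q=1$ holds. The remaining hypotheses of Theorem 3.4 are assumed here verbatim: the normalisation $\mathcal{I}_{K}^{\alpha}(|w|)(t)=1$ and the bounds $m\le |f(s)|\le M$, $n\le |g(s)|\le N$ on $[a,t]_{\mathbb{T}}$.

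The constants specialise as follows. We get $\gamma=\max\{1/2,1/2\}=\tfrac12$. The exponents $p/2$ and $q/2$ both equal $1$, so
\[
\Lambda=\max\left\{\frac{M}{m},\frac{N}{n}\right\}-\min\left\{\frac{m}{M},\frac{n}{N}\right\}.
\]
The factors $[\mathcal{I}_{K}^{\alpha}(|w||f|^p)(t)]^{1/p}$ and $[\mathcal{I}_{K}^{\alpha}(|w||g|^q)(t)]^{1/q}$ become the square roots $[\mathcal{I}_{K}^{\alpha}(|w||f|^2)(t)]^{1/2}$ and $[\mathcal{I}_{K}^{\alpha}(|w||g|^2)(t)]^{1/2}$. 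Substituting these into the two-sided estimate of Theorem 3.4 gives the upper bound exactly as displayed.

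The left inequality $0\le\cdots$ needs a separate check, since Theorem 3.4 cites only "the usual H\"older inequality". Here it is the Cauchy--Schwarz inequality for the nonnegative functional $h\mapsto\int_a^t\Omega_t(s)h(s)\diamond_\alpha s$. I would verify it directly by one of two routes. The first is the double-integral identity from the proof of Theorem 3.1: the gap $\mathcal{I}_{K}^{\alpha}(|w||f|^2)\mathcal{I}_{K}^{\alpha}(|w||g|^2)-(\mathcal{I}_{K}^{\alpha}(|w||f||g|))^2$ equals one half of a double integral of $\Omega_t(s)\Omega_t(r)(|f(s)||g(r)|-|g(s)||f(r)|)^2$, which is nonnegative because $K\ge0$. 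Then take square roots, using that both sides are nonnegative. The second route is the lower bound $\beta(\cdot)^2\ge0$ in the reverse Young lemma with $\delta=1/2$, integrated against $\Omega_t$ after normalising by $A(t),B(t)>0$; these are positive since $|f|\ge m>0$, $|g|\ge n>0$ and the weight integrates to $1$.

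There is no real obstacle, because the result is a direct substitution. The only point needing care is this nonnegativity part. It relies on the positivity of the kernel-weighted diamond-$\alpha$ functional, which holds because it is a convex combination of delta and nabla integrals of a nonnegative integrand.
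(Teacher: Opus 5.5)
Your proposal is correct and matches the paper: the corollary is exactly the bounded reverse Rogers--H\"older theorem (the paper's ``Theorem 3.4'') specialised to $p=q=2$, so that $\gamma=\tfrac12$ and $\Lambda$ has exponents $p/2=q/2=1$. Your separate check of the lower bound $0\le\cdots$, via the double-integral identity or the $\beta$-term of the reverse Young lemma, only makes explicit what the paper attributes to the usual H\"older (here Cauchy--Schwarz) inequality.
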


\begin{theorem}[Kantorovich-type reverse Rogers--H\"older inequality]
Let $p,q>1$ with $1/p+1/q=1$ and assume that
\[
    \mathcal{I}_{K}^{\alpha}(|w|)(t)=1.
\]
Suppose that
\[
    0<m\leq |f(s)|\leq M<\infty,
    \qquad
    0<n\leq |g(s)|\leq N<\infty,
\]
for all $s\in[a,t]_{\mathbb{T}}$. Then
\[
\begin{aligned}
&
\left[
\mathcal{I}_{K}^{\alpha}(|w||f|^p)(t)
\right]^{1/p}
\left[
\mathcal{I}_{K}^{\alpha}(|w||g|^q)(t)
\right]^{1/q}                                                \\
&\leq
K^{\gamma}
\left[
\left(\frac{M}{m}\right)^p
\left(\frac{N}{n}\right)^q
\right]
\mathcal{I}_{K}^{\alpha}(|w||f||g|)(t),
\end{aligned}
\]
where
\[
    \gamma=\max\left\{\frac1p,\frac1q\right\}
\]
and
\[
    K(h)=\frac{(h+1)^2}{4h},\qquad h>0.
\]
\end{theorem}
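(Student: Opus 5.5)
The plan is to follow the same normalization strategy as in the proof of Theorem 3.4, but replace the bounded reverse Young inequality by the Kantorovich-type reverse Young inequality (Lemma 2.3). Put
\[
A(t)=\mathcal{I}_{K}^{\alpha}(|w||f|^p)(t),\qquad B(t)=\mathcal{I}_{K}^{\alpha}(|w||g|^q)(t).
\]
Since $\mathcal{I}_{K}^{\alpha}(|w|)(t)=1$ and $m\le|f|\le M$, $n\le|g|\le N$, positivity of the kernel gives $m^p\le A(t)\le M^p$ and $n^q\le B(t)\le N^q$. In particular $A(t),B(t)>0$. For $s\in[a,t]_{\mathbb{T}}$ we define
\[
\Phi(s)=\frac{|f(s)|^p}{A(t)},\qquad \Psi(s)=\frac{|g(s)|^q}{B(t)}.
\]

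The key step is to bound the ratio $\Phi/\Psi$ two-sidedly. From the bounds above we have $(m/M)^p\le\Phi(s)\le(M/m)^p$ and $(n/N)^q\le\Psi(s)\le(N/n)^q$. Hence
\[
L^{-1}\le \frac{\Phi(s)}{\Psi(s)}\le L,\qquad L:=\left(\frac{M}{m}\right)^p\left(\frac{N}{n}\right)^q .
\]
Applying Lemma 2.3 with $\delta=1/q$, $1-\delta=1/p$ (so $\gamma=\max\{1/p,1/q\}$) gives, pointwise,
\[
\frac1p\Phi(s)+\frac1q\Psi(s)\le K^{\gamma}(L)\,\Phi(s)^{1/p}\Psi(s)^{1/q}
= K^{\gamma}(L)\frac{|f(s)||g(s)|}{A(t)^{1/p}B(t)^{1/q}}.
\]
The degenerate case $L=1$ (which forces $|f|,|g|$ constant) is outside the lemma's hypothesis $L>1$. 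There $K(1)=1$ and the claim is just the ordinary weighted Young inequality, so it can be handled separately or by noting the lemma's proof remains valid.

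Next I would multiply by $\Omega_t(s)=K(t,s)|w(s)|\ge0$ and integrate over $[a,t]_{\mathbb{T}}$ with the diamond-$\alpha$ integral. This preserves the inequality because the integral is monotone for nonnegative integrands. On the left, $\int_a^t\Omega_t\Phi\,\diamond_\alpha s=\int_a^t\Omega_t\Psi\,\diamond_\alpha s=1$, so the left side equals $\frac1p+\frac1q=1$. The right side is $K^{\gamma}(L)\,\mathcal{I}_{K}^{\alpha}(|w||f||g|)(t)/(A(t)^{1/p}B(t)^{1/q})$. Multiplying through by $A(t)^{1/p}B(t)^{1/q}>0$ yields the stated inequality.

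The only delicate point is the ratio bound: it must use the crude bounds on $A(t),B(t)$ coming from the normalization. Once $L$ is identified correctly, the rest is routine. One should also keep the notational clash in mind between the kernel $K(t,s)$ and the Kantorovich ratio $K(h)$, which play unrelated roles.
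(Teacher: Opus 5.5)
Your proof is correct and is essentially the paper's own argument. It uses the same $A(t),B(t)$, the same normalized $\Phi=|f|^p/A(t)$ and $\Psi=|g|^q/B(t)$, the same ratio bound $L^{-1}\le\Phi/\Psi\le L$ with $L=(M/m)^p(N/n)^q$, Lemma 2.3 with $\delta=1/q$, and the same integration against $\Omega_t$.

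You are more accurate than the paper in noting that the left side integrates to $1$ because of how $A(t),B(t)$ are defined, not because of the normalization. One small correction in your degenerate case $L=1$: what you need there is not the ordinary Young inequality but its reverse, and it holds trivially with equality because then $\Phi\equiv\Psi\equiv1$.
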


\begin{proof}
Let
\[
    A(t)=\mathcal{I}_{K}^{\alpha}(|w||f|^p)(t),
    \qquad
    B(t)=\mathcal{I}_{K}^{\alpha}(|w||g|^q)(t).
\]
As in the proof of Theorem 3.4, we have
\[
    m^p\leq A(t)\leq M^p,
    \qquad
    n^q\leq B(t)\leq N^q.
\]
Define
\[
    \Phi(s)=\frac{|f(s)|^p}{A(t)},
    \qquad
    \Psi(s)=\frac{|g(s)|^q}{B(t)}.
\]
Then
\[
    \frac{\Phi(s)}{\Psi(s)}
    =
    \frac{|f(s)|^p B(t)}{|g(s)|^q A(t)}.
\]
Using the bounds on $f,g,A(t)$ and $B(t)$, we obtain
\[
\left[
\left(\frac{M}{m}\right)^p
\left(\frac{N}{n}\right)^q
\right]^{-1}
\leq
\frac{\Phi(s)}{\Psi(s)}
\leq
\left(\frac{M}{m}\right)^p
\left(\frac{N}{n}\right)^q.
\]
Let
\[
    L=
    \left(\frac{M}{m}\right)^p
    \left(\frac{N}{n}\right)^q.
\]
By the Kantorovich-type Young inequality with $\delta=1/q$, we have
\[
    \frac1p\Phi(s)+\frac1q\Psi(s)
    \leq
    K^{\gamma}(L)\Phi(s)^{1/p}\Psi(s)^{1/q}.
\]
Multiplying by $\Omega_t(s)$ and integrating over $[a,t]_{\mathbb{T}}$, we get
\[
\begin{aligned}
&\int_a^t\Omega_t(s)
\left[
\frac1p\Phi(s)+\frac1q\Psi(s)
\right]\diamond_{\alpha}s                                      \\
&\leq
K^{\gamma}(L)
\int_a^t\Omega_t(s)\Phi(s)^{1/p}\Psi(s)^{1/q}
\diamond_{\alpha}s.
\end{aligned}
\]
The left-hand side is equal to $1$, since
$\mathcal{I}_{K}^{\alpha}(|w|)(t)=1$. The right-hand side is
\[
    K^{\gamma}(L)
    \frac{
    \mathcal{I}_{K}^{\alpha}(|w||f||g|)(t)
    }
    {
    A(t)^{1/p}B(t)^{1/q}
    }.
\]
Therefore,
\[
    1
    \leq
    K^{\gamma}(L)
    \frac{
    \mathcal{I}_{K}^{\alpha}(|w||f||g|)(t)
    }
    {
    A(t)^{1/p}B(t)^{1/q}
    }.
\]
Rearranging gives
\[
    A(t)^{1/p}B(t)^{1/q}
    \leq
    K^{\gamma}(L)
    \mathcal{I}_{K}^{\alpha}(|w||f||g|)(t),
\]
which is the desired result.
\end{proof}

\begin{remark}
The previous theorem provides a multiplicative reverse form of the
Rogers--H\"older inequality. The constant is expressed in terms of the
Kantorovich ratio and depends only on the bounds of $f$ and $g$.
\end{remark}

\begin{remark}
All the above results reduce to the corresponding non-kernel diamond-$\alpha$
inequalities when $K(t,s)=1$. Moreover, by choosing
$\mathbb{T}=\mathbb{R}$, $\mathbb{T}=\mathbb{Z}$, or
$\mathbb{T}=q^{\mathbb{N}_0}$, one obtains continuous, discrete and quantum
forms, respectively.
\end{remark}

\section{Validation through Examples}

To verify the applicability of Theorem~3.1, we consider two representative
time scales, namely the continuous time scale $\mathbb{T}=\mathbb{R}$ and the
discrete time scale $\mathbb{T}=\mathbb{Z}$.

\subsection{Example 1.}

Let
\[
    \mathbb{T}=\mathbb{R}, \qquad a=0, \qquad t=1, \qquad \alpha=1.
\]
Then the diamond-$\alpha$ integral reduces to the usual integral. Choose
\[
    w(s)=1, \qquad f(s)=1+s, \qquad g(s)=1,
\]
and let
\[
    K(1,s)=1+s, \qquad 0\leq s\leq 1.
\]
For $\delta=1/4$, we have
\[
    \beta=\frac14, \qquad \gamma=\frac34.
\]

Now,
\[
\mathcal{I}_{K}^{\alpha}(|w||f|^2)(1)
=
\int_0^1 (1+s)^3\,ds
=
\frac{15}{4},
\]
\[
\mathcal{I}_{K}^{\alpha}(|w||g|^2)(1)
=
\int_0^1 (1+s)\,ds
=
\frac32,
\]
and
\[
\mathcal{I}_{K}^{\alpha}(|w||f||g|)(1)
=
\int_0^1 (1+s)^2\,ds
=
\frac73.
\]
Hence,
\[
\begin{aligned}
D
&=
\mathcal{I}_{K}^{\alpha}(|w||f|^2)(1)
\mathcal{I}_{K}^{\alpha}(|w||g|^2)(1)
-
\left[
\mathcal{I}_{K}^{\alpha}(|w||f||g|)(1)
\right]^2  \\
&=
\frac{15}{4}\cdot\frac32-\left(\frac73\right)^2
=
\frac{13}{72}.
\end{aligned}
\]

Furthermore, since
\[
    2(1-\delta)=\frac32,
    \qquad
    2\delta=\frac12,
\]
we obtain
\[
\begin{aligned}
\mathcal{I}_{K}^{\alpha}
\left(|w||f|^{2(1-\delta)}|g|^{2\delta}\right)(1)
&=
\int_0^1 (1+s)^{5/2}\,ds  \\
&=
\frac{16\sqrt{2}-2}{7},
\end{aligned}
\]
and
\[
\begin{aligned}
\mathcal{I}_{K}^{\alpha}
\left(|w||f|^{2\delta}|g|^{2(1-\delta)}\right)(1)
&=
\int_0^1 (1+s)^{3/2}\,ds  \\
&=
\frac{8\sqrt{2}-2}{5}.
\end{aligned}
\]
Therefore,
\[
\begin{aligned}
E
&=
\mathcal{I}_{K}^{\alpha}(|w||f|^2)(1)
\mathcal{I}_{K}^{\alpha}(|w||g|^2)(1)  \\
&\quad -
\mathcal{I}_{K}^{\alpha}
\left(|w||f|^{2(1-\delta)}|g|^{2\delta}\right)(1)
\mathcal{I}_{K}^{\alpha}
\left(|w||f|^{2\delta}|g|^{2(1-\delta)}\right)(1) \\
&=
\frac{384\sqrt{2}-505}{280}.
\end{aligned}
\]

Consequently,
\[
    2\beta D=\frac{13}{144}\approx 0.09027,
\]
\[
    E=\frac{384\sqrt{2}-505}{280}\approx 0.13592,
\]
and
\[
    2\gamma D=\frac{13}{48}\approx 0.27083.
\]
Thus,
\[
    2\beta D\leq E\leq 2\gamma D.
\]
This verifies Theorem~3.1 for the continuous time scale.

\subsection{Example 2.}

Let
\[
    \mathbb{T}=\mathbb{Z}, \qquad a=0, \qquad t=3, \qquad \alpha=1.
\]
Then the diamond-$\alpha$ integral reduces to the delta integral, and
\[
    \int_0^3 h(s)\Delta s=\sum_{s=0}^{2}h(s).
\]
Choose
\[
    w(s)=1, \qquad f(s)=1+s, \qquad g(s)=1,
\]
and let
\[
    K(3,s)=1+s, \qquad s=0,1,2.
\]
For $\delta=1/4$, we again have
\[
    \beta=\frac14, \qquad \gamma=\frac34.
\]

Now,
\[
\mathcal{I}_{K}^{\alpha}(|w||f|^2)(3)
=
\sum_{s=0}^{2}(1+s)^3
=
1^3+2^3+3^3
=
36,
\]
\[
\mathcal{I}_{K}^{\alpha}(|w||g|^2)(3)
=
\sum_{s=0}^{2}(1+s)
=
1+2+3
=
6,
\]
and
\[
\mathcal{I}_{K}^{\alpha}(|w||f||g|)(3)
=
\sum_{s=0}^{2}(1+s)^2
=
1^2+2^2+3^2
=
14.
\]
Hence,
\[
\begin{aligned}
D
&=
\mathcal{I}_{K}^{\alpha}(|w||f|^2)(3)
\mathcal{I}_{K}^{\alpha}(|w||g|^2)(3)
-
\left[
\mathcal{I}_{K}^{\alpha}(|w||f||g|)(3)
\right]^2  \\
&=
36\cdot 6-14^2
=
20.
\end{aligned}
\]

Furthermore,
\[
\begin{aligned}
\mathcal{I}_{K}^{\alpha}
\left(|w||f|^{2(1-\delta)}|g|^{2\delta}\right)(3)
&=
\sum_{s=0}^{2}(1+s)^{5/2} \\
&=
1+4\sqrt{2}+9\sqrt{3},
\end{aligned}
\]
and
\[
\begin{aligned}
\mathcal{I}_{K}^{\alpha}
\left(|w||f|^{2\delta}|g|^{2(1-\delta)}\right)(3)
&=
\sum_{s=0}^{2}(1+s)^{3/2} \\
&=
1+2\sqrt{2}+3\sqrt{3}.
\end{aligned}
\]
Therefore,
\[
\begin{aligned}
E
&=
36\cdot 6
-
(1+4\sqrt{2}+9\sqrt{3})(1+2\sqrt{2}+3\sqrt{3}) \\
&=
118-6\sqrt{2}-12\sqrt{3}-30\sqrt{6}.
\end{aligned}
\]

Consequently,
\[
    2\beta D=10,
\]
\[
    E=118-6\sqrt{2}-12\sqrt{3}-30\sqrt{6}
    \approx 15.2454,
\]
and
\[
    2\gamma D=30.
\]
Thus,
\[
    2\beta D\leq E\leq 2\gamma D.
\]
This verifies Theorem~3.1 for the discrete time scale.

The above computations show that the proposed kernel-type reverse Callebaut
inequality is consistent with both continuous and discrete time-scale settings.

\section{Conclusion}

In this paper, kernel-type fractional extensions of reverse Callebaut, Rogers--H\"older and Cauchy--Schwarz inequalities have been established on time scales. By introducing a nonnegative kernel into the diamond-$\alpha$ integral framework, the obtained results provide a unified approach for treating weighted and memory-dependent dynamic inequalities. The main results extend known reverse inequalities by incorporating kernel effects while preserving the structure of the classical estimates through reverse Young-type inequalities and Kantorovich-type bounds. Several particular cases have also been discussed, showing that the proposed inequalities reduce to the usual diamond-$\alpha$ dynamic inequalities when the kernel is chosen suitably. Moreover, the continuous and discrete validations demonstrate that the results are consistent with the time-scale framework and recover meaningful integral and summation forms.

The present work highlights the usefulness of kernel-type operators in extending classical reverse inequalities to broader dynamic settings. Since many models involving delay, heredity and memory can be described through integral kernels, the proposed inequalities may serve as useful tools in deriving a priori bounds and stability estimates for Volterra-type and delay dynamic equations on time scales.

As a possible future direction, one may investigate analogous reverse inequalities for fractional dynamic operators involving Caputo-type, Hilfer-type or conformable derivatives on time scales. Another interesting open problem is to apply the present kernel-type reverse inequalities to obtain explicit stability and boundedness criteria for nonlinear fractional dynamic equations with delay and impulsive effects.


\end{document}